\documentclass{article}
\usepackage{graphicx} 
\usepackage{hyperref}   
\hypersetup{
	colorlinks=true,
	linkcolor=blue,
	citecolor=blue,
	urlcolor=blue,
}
\usepackage{amsthm}
\usepackage{xcolor}
\usepackage{algorithm}
\usepackage{algpseudocode}
\usepackage{graphicx}
\usepackage{amsfonts}
\usepackage{amsmath,mathtools,amssymb,amsfonts}

\newcommand{\norm}[1]{\left\lVert#1\right\rVert}
\newcommand{\xddots}{%
  \raise 4pt \hbox {.}
  \mkern 6mu
  \raise 1pt \hbox {.}
  \mkern 6mu
  \raise -2pt \hbox {.}
}
\DeclareMathOperator*{\SubjectTo}{Subject\phantom{a}to:}
\DeclareMathOperator*{\Minimize}{Minimize:}
\DeclareMathOperator*{\Maximize}{Maximize:}
\DeclareMathOperator*{\argmin}{arg\,min}

\newtheorem{assumption}{\it{Assumption}}
\newtheorem{theorem}{\bf{Theorem}}

\newtheorem{proposition}{\bf{Proposition}}
\newtheorem{remark}{\bf{Remark}}

\title{Fault-Tolerant Decentralized Control for Large-Scale Inverter-Based Resources for Active Power Tracking}
\author{Satish Vedula, Ayobami Olajube, and \\ Olugbenga Moses Anubi}

\date{}

\begin{document}

\maketitle
\begin{centering}
Department of Electrical and Computer Engineering, the Center for Advanced Power Systems, Florida State University \\
E-mail: \{svedula, aolajube, oanubi\}@fsu.edu
\end{centering}

\section{Abstract}
Integration of Inverter Based Resources (IBRs) which lack the intrinsic characteristics such as the inertial response of the traditional synchronous-generator (SG) based sources presents a new challenge in the form of analyzing the grid stability under their presence. While the dynamic composition of IBRs differs from that of the SGs, the control objective remains similar in terms of tracking the desired active power. This letter presents a decentralized primal-dual-based fault-tolerant control framework for the power allocation in IBRs. Overall, a hierarchical control algorithm is developed with a lower level addressing the current control and the parameter estimation for the IBRs and the higher level acting as the reference power generator to the low level based on the desired active power profile. The decentralized network-based algorithm adaptively splits the desired power between the IBRs taking into consideration the health of the IBRs transmission lines. The proposed framework is tested through a simulation on the network of IBRs and the high-level controller performance is compared against the existing framework in the literature. The proposed algorithm shows significant performance improvement in the magnitude of power deviation and settling time to the nominal value under faulty conditions as compared to the algorithm in the literature.

\section{Introduction}
Electrical Power Systems (EPS) are undergoing substantial structural and operational transformation by replacing the existing Synchronous Machines (SMs) with the Inverter Based Resources (IBRs). In contrast to the SMs, the IBRs exhibit low inertia, resulting in a swift response to stochastic events. However, this presents considerable control challenges related to the stability and robustness of the EPS. In the existing literature, several researchers have proposed methods for controlling large-scale IBRs such as discrete-time consensus control using proportional derivative PD \cite{chen2020distributed}, and distributed droop control \cite{schiffer2015voltage}. However, these approaches lack robustness. A distributed model predictive control (MPC) for droop-controlled IBRs was proposed by \cite{anderson2019distributed}. Since this approach is based on MPC, it is computationally heavy. A distributed sliding mode control (SMC) for islanded AC microgrids was presented in ~\cite{alfaro2021distributed}. SMC-based control techniques show great robustness against exogenous disturbances but surfers from chattering effects. Also, observers are often required for disturbance estimation and chattering reduction which may increase with switching gain in inverters thereby increasing the complexity of implementing SMC \cite{9829024}.

Moreover, in a power system network proliferated with IBRs, there is a possibility of losing any of the distributed energy resources (DERs) because of poor response to time-varying load changes or faults. Thus, recovering the aggregated total output power from collective IBRs may be an issue. This presents a serious control problem. In \cite{AMELI}, a robust adaptive control technique is used to track the aggregate output power of the IBRs when one of them is lost. The result gave a perfect output power tracking that matches the total power contributed by each unit of IBRs.  A dispatchable virtual oscillator control is designed for a network of IBRs to track the desired active power, and voltage magnitude in~\cite{subotic2020lyapunov}. The method provides sufficient conditions for voltage stability but does not give the admissible set for desired powers. 

A renowned method for maintaining power sharing in the power grids dominated by IBRs is the droop control technique. The control algorithm is saddled with voltage and frequency deviation issues emanating from uncertainties in the output impedance and poor transient performance. Traditional droop control is very effective in systems with resistive output impedance which results in poor grid stability \cite{en15124439}. In as much as droop control techniques have been modified to tackle these issues, tracking the cumulative output power resulting from the loss of an inverter or the poor output impedance condition is still an open problem. Since the contemporary power droop control method degrades with the line impedance, modified droop control has been developed to improve the active power sharing among the grid-connected inverters \cite{8660486}. 

Nevertheless, the certificate of stability in the presence of the disturbances and the active power tracking is not guaranteed to employ the droop methods. Few authors have proposed a robust control-based approach to address stability and power quality issues arising from the integration of IBRs \cite{Anubi, faiz2020h}. An optimization-based approach was proposed in \cite{Bidram}. However, the active power tracking under IBR failure was not discussed. In \cite{CHANG201685}, the authors proposed a distributed control for IBR coordination in the islanded microgrids. However, the impact of the failure of the IBR was not discussed. Thus, this paper focuses on the tracking of the aggregated active power under IBR failure in a decentralized fault-tolerant framework. The main contributions in the paper are:
\begin{enumerate}
    \item A decentralized fault-tolerant high-level control is proposed that adaptively splits the active power among the IBRs in the presence of faults.
    \item An adaptive estimator for the parameter estimation is designed with the stability analysis. 
    \item A faster response in power shared during faulty conditions is demonstrated compared to the previously proposed approach.
    \item Improvement in the active power tracking under faults is demonstrated compared to previously proposed work.
\end{enumerate}

The paper is organized as follows: Section-\ref{Not_Pre} introduces the mathematical notations used throughout the paper and the preliminaries to better understand the control development. In Section-\ref{Model}, the inverter model is presented followed by the low-level and high-level control design in Section-\ref{Control}. Section-\ref{Sim} presents the simulation results of the designed controller and the results are compared with the results in the literature.

\section{Notations and Preliminaries}\label{Not_Pre}
$\mathbb{N}$, $\mathbb{R}$, and $\mathbb{R}_+$ denote the set of natural, real, and positive real numbers. $\mathbb{L}_2$ and $\mathbb{L}_{\infty}$ denote the square-integrable (measurable) and bounded signal spaces. A real matrix with $n$ rows and $m$ columns is denoted as $X \in \mathbb{R}^{n \times m}$. The identity matrix is denoted as $I$. $X^\top$ denotes the transpose of a matrix $X$. Natural and Real scalars are denoted by lowercase alphabets (for example $x \in \mathbb{N}$ and $y \in \mathbb{R}$). The real vectors are represented by the lowercase bold alphabets (i.e. $\textbf{x} \in \mathbb{R}^{n}$). The vector of ones and zeros is denoted as $\mathbf{1}$ and $\underline{\mathbf{0}}$. 
 For any vector $\mathbf{x} \in \mathbb{R}^n$, $\|\mathbf{x}\|_2 \triangleq \sqrt{\mathbf{x}^\top\mathbf{x}}$ and $\|\mathbf{x}\|_1
 \triangleq \sum_{i=1}^{n}|\mathbf{x}_i|$, representing the 2-norm and the 1-norm, respectively (where $|.|$ denotes absolute value). The symbol $\preceq$ denotes the component-wise inequality i.e. $\mathbf{x} \preceq \mathbf{y}$ is equivalent to $\mathbf{x}_i \leq \mathbf{y}_i$ for $i=1,2,\hdots,n$. The dot product/inner product of the two vectors $\mathbf{x} \in \mathbb{R}^n$ and $\mathbf{y} \in \mathbb{R}^n$ is denoted as $\mathbf{x}^\top \mathbf{y}$. The cross product/outer product for $n=2$ is denoted as $\mathbf{x}^\top J \mathbf{y}$, where $J \triangleq\begin{bmatrix}
     0 & 1 \\ -1 & 0
 \end{bmatrix}$. For a function $f:\mathbb{R}^n \longrightarrow \mathbb{R}^m$, $\nabla f(x)$ denotes the gradient of the function $f$ at $x$. $\nabla^2f(x)$ denotes the hessian of the function $f$ at $x$. $\mathcal{B}(0,r)\triangleq\left\{\mathbf{x}\in\mathbb{R}^n|\left\|\mathbf{x}\right\|<r\right\}\subset \mathbb{R}^n$ denotes a ball of radius $r$ centered at the origin. 

\subsection{Preliminaries}  
    Consider the network with $m \in \mathbb{N}$ nodes or agents labeled by the set $\mathcal{V}=\{1,2,\hdots,m\}$, $\mathcal{E}$ is the \emph{unordered} edge such that $\mathcal{E} \subseteq \mathcal{V} \times \mathcal{V}$. The connection between the nodes is fixed $\mathcal{G}=(\mathcal{V},\mathcal{E})$. For an undirected graph, The \emph{adjacency} matrix ($A \in \mathbb{R}^{m \times m}$) is denoted as 
    $a_{ij}= \{1 \hspace{1mm}  \text{if} \hspace{1mm} (i,j)\in\mathcal{E} | 0 \hspace{1mm}  \text{otherwise}\},$ 
    i.e. if there is a path between two nodes, then the value of 1 is assigned otherwise the value in the matrix is set to 0. The \emph{degree} of a graph ($\mathcal{N}_i$) denotes the number of neighboring nodes of a given node. The degree matrix ($D \in \mathbb{R}^{m \times m}$) is the degree of a given node on the diagonal and 0's elsewhere. The \emph{Laplacian} matrix ($L=D-A$), is the difference between the degree and the adjacency matrix. The Laplacian-based weighted graph matrix is defined as follows: $W = I-\frac{1}{\tau}L$, where $\tau > \frac{1}{2}\lambda_{max}L$ is a constant, $\lambda_{max}$ is the maximum eigenvalue of $L$.

\begin{figure}[h!] 
\centerline{\includegraphics[width=2.8 in]{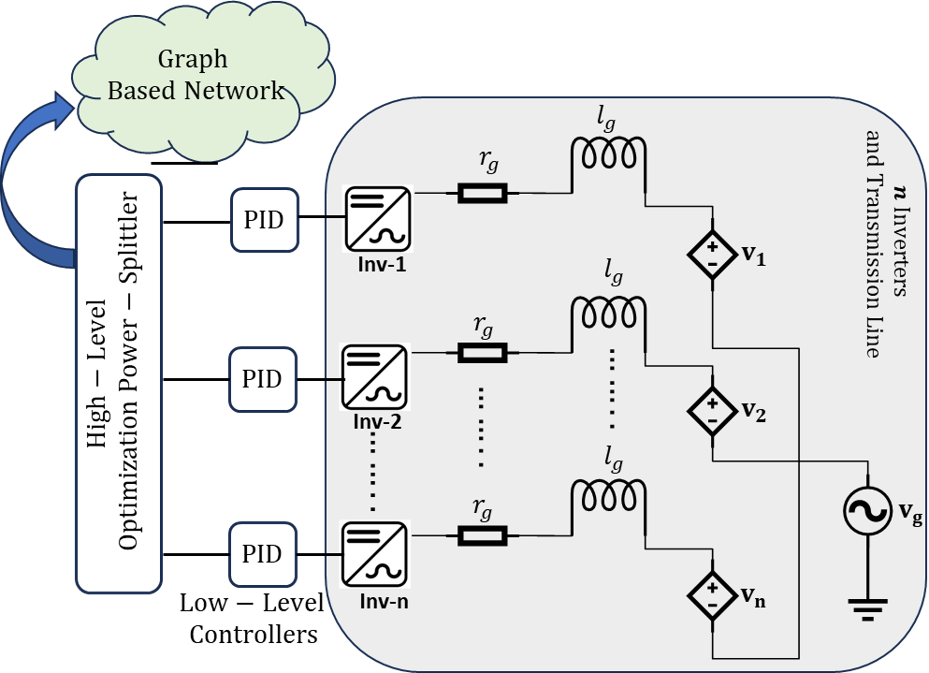}}
\caption{Hierarchical optimization based control design for IBRs}
\label{fig_example5.jpg}
\end{figure}

\section{Model Development}\label{Model}
The dynamics of a \emph{single} grid following IBR model connected to the grid via a transmission line in $dq$ coordinates is given as follows \cite{levron}:
\begin{equation}\label{Inverter_Model}
    l_g\frac{d\mathbf{i}}{dt} = -(r_g I-l_g \omega_g J)\mathbf{i}(t) + \mathbf{v}(t) -\mathbf{v}_g,
\end{equation}
where, $\mathbf{i}(t) \in \mathbb{R}^2$, $\mathbf{v}(t) \in \mathbb{R}^2$ and $\mathbf{v}_g \in \mathbb{L}_2$ are the inverter current, voltage (control input) and measured grid voltage respectively. $r_g \in \mathbb{R}_+$ and $l_g \in \mathbb{R}_+$ is the resistance and the inductance of the transmission line connecting to the grid in \textsf{Ohm} and \textsf{Henry}. $\omega_g \in \mathbb{R}_+$ is the grid frequency in \textsf{rad/s}. 
\begin{assumption} For simplicity of exposition, the grid frequency $\omega_g$ is considered to be fixed. Also, the switching dynamics of the inverter are neglected \cite{Anubi}. The grid voltage $\mathbf{v}_g$ is regulated to a fixed value since the IBRs are designed in a grid-following mode.
\end{assumption}

\section{Control Development}\label{Control}
The overall control development model consists of the high-level aggregated power tracking controller acting as a reference current generator for the low-level controller as shown in Figure \ref{fig_example5.jpg}. 
\subsection{Low-Level Control}
The objective of the low-level control design is to track a given reference power signal asymptotically while learning the fault indicating uncertain model parameters. Since the voltage is assumed to be regulated, the power tracking problem is converted to a current tracking problem. Namely, given desired active and reactive power pair $(p_{ref},q_{ref})$, the desired reference current is given by
\begin{equation}
    \mathbf{i}_{ref} = \frac{\mathbf{v}_g}{\norm{\mathbf{v}_g}_2^2}p_{ref}+J\frac{\mathbf{v}_g}{\norm{\mathbf{v}_g}_2^2}q_{ref}.
\end{equation}
To achieve the adaptive current tracking objective, we consider the \emph{fault-free} reference model 
   \begin{equation}\label{Model Reference}
   l_m \frac{d\mathbf{i}_m}{dt}=-\bigg((r_m+k)I-l_m\omega_gJ\bigg)\left(\mathbf{i}_m(t)-\mathbf{i}_{ref}(t)\right),
   \end{equation}  
   where $\mathbf{i}_m(t) \in \mathbb{R}^{2}$, $\mathbf{i}_{ref}(t) \in \mathbb{R}^{2}$ is the desired and bounded reference, $r_m >0, l_m >0$ are the known model reference parameters, $k>0$ is a convergence rate tuning parameter. Consequently, the model reference active and reactive powers, $p_m\triangleq\mathbf{v}_g^\top\mathbf{i}_m$ and $q_m\triangleq\mathbf{v}_g^\top J\mathbf{i}_m$, satisfy
   \begin{align}
       \left\|\left[\begin{array}{c}p_m(t)-p_{ref}\\q_m(t)-q_{ref}\end{array}\right]\right\|_2 \le \left\|\left[\begin{array}{c}p_m(0)-p_{ref}\\q_m(0)-q_{ref}\end{array}\right]\right\|_2e^{-\frac{r_m+k}{l_m}t},
   \end{align}
which shows an exponential tracking of the power references with a rate that can be tuned by the control gain $k$.

Next, consider the current tracking error, between the system dynamics in \eqref{Inverter_Model} and the model reference in \eqref{Model Reference}, as follows:
\begin{equation}\label{measurement_tracking}
    \tilde{\mathbf{i}}(t) = \mathbf{i}(t)-\mathbf{i}_m(t).
\end{equation}
Taking the first time derivative yields the error dynamics
\begin{equation}
\begin{aligned}
    l_g\dot{\tilde{\mathbf{i}}}(t) = -(r_gI-l_g\omega_gJ)\mathbf{i}(t)+\mathbf{v}(t)-\mathbf{v}_g+((r_m+k)I\\-l_m\omega_gJ)\mathbf{i}_m(t)-((r_m+k)I-l_m\omega_gJ)\mathbf{i}_{ref}(t), 
\end{aligned}
\end{equation}
adding and subtracting $\hat{r}_g\mathbf{i}(t)$ and setting that the value of the inductance $l_g = l_{g_0} + \Delta l_g$, where $\hat{r}_g$ is an estimate of $r_g$ to be designed and $l_{g_0}$ is the known nominal inductance value with $\Delta l_g$ the associated parametric uncertainty, yields:
\begin{align*}
\l_g\dot{\tilde{\mathbf{i}}}(t) &= -(r_m+k)\tilde{\mathbf{i}}(t)+\tilde{r}_g\mathbf{i}(t)-(\hat{r}_g-r_m-k)\mathbf{i}(t)+\\&\hspace{10mm}(l_{g_0}+\Delta l_g)\omega_g J \mathbf{i}(t)-l_m\omega_g J\mathbf{i}_m(t)-\mathbf{v}_g-\\&\hspace{16mm}((r_m+k)I-l_m\omega_gJ)\mathbf{i}_{ref}(t)+\mathbf{v}(t),
\end{align*}
where $\tilde{r}_g=\hat{r}_g-{r}_g$. Consider the control law
\begin{equation}
\begin{aligned}
   \mathbf{v}(t)=(\hat{r}_g-r_m-k)\mathbf{i}(t)-l_{g_0} \omega_g J \mathbf{i}(t)+l_m \omega_g J \mathbf{i}_m(t)\\+ ((r_m+k)I-l_m\omega_gJ)\mathbf{i}_{ref}(t) + \mathbf{v}_g.
\end{aligned}
\end{equation}
Thus, the error system becomes
\begin{equation}\label{closed_loop_error}
   l_g\dot{\tilde{\mathbf{i}}}(t) = -(r_m+k) \tilde{\mathbf{i}}(t)+\tilde{r}_g\mathbf{i}(t) + \Delta l_g\omega_g J \mathbf{i}(t), 
\end{equation}
with the associated update laws given in the next result
\begin{theorem}
   Given $\varepsilon>0$ and $\bar{r}>0$. Consider the error dynamics in (\ref{closed_loop_error}). If the parameter estimates satisfy the update law
   \begin{equation}\label{update_laws}
   \dot{\hat{r}}_g=\gamma_r \textsf{Proj}_f\left(\hat{r}_g,-\tilde{\mathbf{i}}(t)^\top\mathbf{i}(t)\right),\hspace{2mm}|\hat{r}_g(0)|<\bar{r},
   \end{equation}
   where $\gamma_r > 0$ is the associated adaptation rate for the parameter $r_g$ and $f(r) = \frac{r^2-\bar{r}^2}{2\varepsilon\bar{r}+\varepsilon^2}$, then the origin of the closed-loop system in (\ref{closed_loop_error}) is globally asymptotically stable.  
\end{theorem}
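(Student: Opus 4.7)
My plan is a composite Lyapunov argument in the extended state $(\tilde{\mathbf{i}},\tilde{r}_g)$, with $\tilde{r}_g\triangleq\hat{r}_g-r_g$, that exploits two standard properties of the smooth projection operator induced by $f(r)=\frac{r^{2}-\bar r^{2}}{2\varepsilon\bar r+\varepsilon^{2}}$: (i) invariance of the compact set $\{|\hat{r}_g|\le\bar r+\varepsilon\}$ whenever $|\hat r_g(0)|<\bar r$, so that $\tilde{r}_g\in\mathbb{L}_\infty$; and (ii) the inequality $\tilde{r}_g\bigl[\textsf{Proj}_f(\hat r_g,y)-y\bigr]\le 0$ valid for every admissible $r_g$ and every $y$. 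The Lyapunov candidate I would use is
\begin{equation*}
V(\tilde{\mathbf{i}},\tilde{r}_g)=\tfrac{1}{2}l_g\,\tilde{\mathbf{i}}^{\top}\tilde{\mathbf{i}}+\tfrac{1}{2\gamma_r}\,\tilde{r}_g^{\,2},
\end{equation*}
which is positive definite and radially unbounded in the extended state, so any bound of the form $\dot V\le 0$ combined with a Barbalat/LaSalle step will yield a global conclusion.

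Differentiating $V$ along \eqref{closed_loop_error} gives
\begin{equation*}
\dot V=-(r_m+k)\|\tilde{\mathbf{i}}\|_{2}^{2}+\tilde r_g\,\tilde{\mathbf{i}}^{\top}\mathbf{i}+\Delta l_g\,\omega_g\,\tilde{\mathbf{i}}^{\top}J\mathbf{i}+\tfrac{1}{\gamma_r}\tilde r_g\,\dot{\hat r}_g,
\end{equation*}
and substituting the update law \eqref{update_laws} with $y=-\tilde{\mathbf{i}}^{\top}\mathbf{i}$ lets property (ii) absorb the parameter cross term, leaving $\dot V\le-(r_m+k)\|\tilde{\mathbf{i}}\|_{2}^{2}+\Delta l_g\omega_g\,\tilde{\mathbf{i}}^{\top}J\mathbf{i}$. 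The hard part is the residual term $\Delta l_g\omega_g\,\tilde{\mathbf{i}}^{\top}J\mathbf{i}$, which the $\hat r_g$-adaptation cannot cancel. I would split $\mathbf{i}=\tilde{\mathbf{i}}+\mathbf{i}_m$, use skew-symmetry $\tilde{\mathbf{i}}^{\top}J\tilde{\mathbf{i}}=0$ to reduce this to $\Delta l_g\omega_g\,\tilde{\mathbf{i}}^{\top}J\mathbf{i}_m$, and dominate it by a Young-type inequality so that a strictly negative quadratic in $\tilde{\mathbf{i}}$ survives; boundedness of $\mathbf{i}_m$ (the reference model \eqref{Model Reference} is Hurwitz driven by the bounded $\mathbf{i}_{\mathrm{ref}}$) makes the remainder a finite disturbance. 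Under the nominal assumption $\Delta l_g=0$ implicit in the theorem, this term vanishes outright and $\dot V\le-(r_m+k)\|\tilde{\mathbf{i}}\|_{2}^{2}\le 0$.

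To close the argument, I would invoke Barbalat's lemma: since $V$ is non-increasing and bounded below, $\tilde{\mathbf{i}}\in\mathbb{L}_{2}\cap\mathbb{L}_\infty$; combining $\tilde r_g\in\mathbb{L}_\infty$ from (i) with boundedness of $\mathbf{i}_m$, the error equation \eqref{closed_loop_error} yields $\dot{\tilde{\mathbf{i}}}\in\mathbb{L}_\infty$, so $\tilde{\mathbf{i}}$ is uniformly continuous and therefore $\tilde{\mathbf{i}}(t)\to 0$. Convergence $\tilde r_g(t)\to 0$ would then follow either from LaSalle's invariance principle applied to the largest invariant set contained in $\{\tilde{\mathbf{i}}=0\}$, or from a persistence-of-excitation argument on $\mathbf{i}(t)$ inherited from a non-trivial $\mathbf{i}_{\mathrm{ref}}$, giving global asymptotic stability of the origin of the closed-loop system.
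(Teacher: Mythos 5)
Your plan reproduces the paper's own proof almost step for step: the same Lyapunov candidate $V(\tilde{\mathbf{i}},\tilde{r}_g)=\frac{l_g}{2}\tilde{\mathbf{i}}^\top\tilde{\mathbf{i}}+\frac{1}{2\gamma_r}\tilde{r}_g^2$, the same projection inequality absorbing the $\tilde{r}_g\,\tilde{\mathbf{i}}^\top\mathbf{i}$ cross term, and the same $\mathbb{L}_2\cap\mathbb{L}_\infty$ plus Barbalat closing argument for $\tilde{\mathbf{i}}(t)\to\underline{\mathbf{0}}$. The one place you diverge is the handling of the term $\Delta l_g\,\omega_g\,\tilde{\mathbf{i}}^\top J\mathbf{i}$ --- and there you are more careful than the paper. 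The paper combines the two halves of this term into $\Delta l_g\,\omega_g\,\mathbf{i}^\top(J+J^\top)\tilde{\mathbf{i}}$ and cancels it by skew-symmetry, but this rests on a transpose slip: since $\tilde{\mathbf{i}}^\top J\mathbf{i}$ is a scalar, $\tilde{\mathbf{i}}^\top J\mathbf{i}=\mathbf{i}^\top J^\top\tilde{\mathbf{i}}$ (the two halves are \emph{equal}, not opposite), so the sum is $\Delta l_g\,\omega_g\,\tilde{\mathbf{i}}^\top J\mathbf{i}$, which does not vanish. Your reduction via $\mathbf{i}=\tilde{\mathbf{i}}+\mathbf{i}_m$ and $\tilde{\mathbf{i}}^\top J\tilde{\mathbf{i}}=0$ to the residual $\Delta l_g\,\omega_g\,\tilde{\mathbf{i}}^\top J\mathbf{i}_m$ correctly identifies the surviving obstruction that the paper's proof erroneously discards.

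That said, be precise about what your own closing step delivers. With $\Delta l_g\neq 0$, the Young-type domination gives $\dot V\le -c\|\tilde{\mathbf{i}}\|^2+d$ with a \emph{constant} $d>0$ (boundedness of $\mathbf{i}_m$ does not make the remainder vanish), which yields only uniform ultimate boundedness --- an estimate in the spirit of Proposition 1 --- and not $\tilde{\mathbf{i}}\to\underline{\mathbf{0}}$; so, as you yourself note, your argument proves the theorem only in the nominal case $\Delta l_g=0$, and the stated global asymptotic claim is otherwise unsubstantiated (in the paper as well, on account of the slip above). Two further caveats: LaSalle's invariance principle is not applicable here because the closed loop is non-autonomous ($\mathbf{i}_m(t)$ and $\mathbf{i}_{ref}(t)$ are time-varying), so parameter convergence $\tilde{r}_g\to 0$ requires the persistence-of-excitation route you mention --- and note the paper never establishes $\tilde{r}_g\to 0$ at all, concluding only $\tilde{\mathbf{i}}\to\underline{\mathbf{0}}$, so ``global asymptotic stability of the origin'' of the full $(\tilde{\mathbf{i}},\tilde{r}_g)$ system is an overstatement in both treatments absent PE. Finally, the projection inequality $\tilde{r}_g\left(\textsf{Proj}_f(\hat{r}_g,-y)+y\right)\le 0$ is valid only when the true parameter lies in the projection set, i.e., $|r_g|\le\bar{r}$; neither the theorem statement nor your property (ii) makes this hypothesis explicit, and it should be added.
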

\begin{proof}
    We drop the explicit usage of time for the simplicity of writing the proof. Consider the following Lyapunov candidate function
    $$V(\tilde{\mathbf{i}}, \tilde{r}_g) = \frac{l_g}{2}\tilde{\mathbf{i}}^\top  \tilde{\mathbf{i}} + \frac{1}{2\gamma_r}\tilde{r}_g^2,$$
    taking the first derivative along the time-variables and substituting (\ref{closed_loop_error}) yields
\begin{align*}
    \dot{V} &= \frac{1}{2}\dot{\tilde{\mathbf{i}}}^\top l_g \tilde{\mathbf{i}}+\frac{1}{2}\tilde{\mathbf{i}}^\top l_g \dot{\tilde{\mathbf{i}}}+\frac{1}{\gamma_r}\tilde{r}_g\dot{\hat{r}}_g,\\
    &= -\frac{(r_m+k)}{2}\tilde{\mathbf{i}}^\top\tilde{\mathbf{i}}+\frac{\tilde{r}_g}{2}\mathbf{i}^\top\tilde{\mathbf{i}}+\frac{1}{2}\Delta l_g \omega_g \mathbf{i}^\top J^\top \tilde{\mathbf{i}}\\&\hspace{10mm}-\frac{(r_m+k)}{2}\tilde{\mathbf{i}}^\top \tilde{\mathbf{i}}+\frac{\tilde{r}_g}{2}\tilde{\mathbf{i}}^\top \mathbf{i}+\frac{1}{2}\Delta l_g \omega_g \tilde{\mathbf{i}}^\top J \mathbf{i}\\&\hspace{5cm}+\frac{1}{\gamma_r}\tilde{r}_g\dot{\hat{r}}_g,\\
    &= -(r_m+k)\tilde{\mathbf{i}}^\top\tilde{\mathbf{i}} +\tilde{r}_g\tilde{\mathbf{i}}^\top \mathbf{i}+\\&\hspace{20mm}\Delta l_g \omega_g \mathbf{i}^\top(J+J^\top)\tilde{\mathbf{i}}+\frac{1}{\gamma_r}\tilde{r}_g\dot{\hat{r}}_g,
\end{align*}
using the skew-symmetric property  $(J^\top = -J)$ and substituting the update law (\ref{update_laws}), and using the property $\tilde{r}_g\left(\textsf{Proj}_f(\hat{r}_g,-y)+y\right)\le0 \text{ for all } y\in\mathbb{R}$, yields
\begin{equation}\label{Lyapunov_derivative}
\begin{aligned}
    \dot{V} &= -(r_m+k) \norm{\tilde{\mathbf{i}}}^2 .
\end{aligned}    
\end{equation}
$\dot{V}(t)$ is negative semi-definite (NSD) and $V(t)>0$. Thus, $V(t) \in \mathbb{L}_{\infty}$ which implies $\tilde{\mathbf{i}}(t), \tilde{r}_g(t) \in \mathbb{L}_{\infty}$. Since $\mathbf{i}_m(t)$ is assumed to be bounded, it implies that $\mathbf{i}(t) \in \mathbb{L}_{\infty}$. Consequently, the control input $\mathbf{v}(t) \in \mathbb{L}_{\infty}$. Integrating (\ref{Lyapunov_derivative})
\begin{equation}
    \begin{aligned}
        V(\infty)-V(0) \leq -(r_m+k) \int_{0}^{\infty}\norm{\tilde{\mathbf{i}}(t)}^2 dt,
    \end{aligned}
\end{equation}
It follows that $\tilde{\mathbf{i}}(t) \in \mathbb{L}_2$. From the implications $\tilde{\mathbf{i}}(t)$ is uniformly continuous. Thus, invoking Barbalat's lemma \cite{khalil2002nonlinear} it follows that $\tilde{\mathbf{i}}(t)\longrightarrow \underline{\mathbf{0}}$.
\end{proof}
\begin{proposition}\label{prop-1}
    The current tracking error $\tilde{\mathbf{i}}(t)$ is uniformly bounded according to
    \begin{equation}\label{bound_conv}
        \sup_{t\in\mathbb{R}_+}\norm{\tilde{\mathbf{i}}(t)}_2 \leq \norm{\tilde{\mathbf{i}}(0)}_2 + \frac{\overline{r}}{\sqrt{2\gamma_r}} ,
    \end{equation}
\end{proposition}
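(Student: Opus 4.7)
The proposition should follow directly from the Lyapunov analysis already carried out in the preceding theorem, with no new dynamical arguments needed. First, I would invoke \eqref{Lyapunov_derivative} to conclude that $\dot V \leq 0$ along every closed-loop trajectory, so $V$ is monotonically non-increasing and in particular $V(t)\leq V(0)$ for every $t\geq 0$. Because both summands in $V(\tilde{\mathbf{i}},\tilde r_g)=\tfrac{l_g}{2}\|\tilde{\mathbf{i}}\|_2^{2}+\tfrac{1}{2\gamma_r}\tilde r_g^{2}$ are non-negative, retaining only the kinetic term on the left-hand side gives the pointwise estimate $\tfrac{l_g}{2}\|\tilde{\mathbf{i}}(t)\|_2^{2}\leq V(0)=\tfrac{l_g}{2}\|\tilde{\mathbf{i}}(0)\|_2^{2}+\tfrac{1}{2\gamma_r}\tilde r_g(0)^{2}$ for all $t\geq 0$.

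Next I would normalize by the $l_g/2$ factor, take a square root, and apply the elementary subadditivity $\sqrt{a+b}\leq\sqrt{a}+\sqrt{b}$ (valid for $a,b\geq 0$) to decouple the initial-current and initial-parameter contributions. This produces a time-independent bound of the form $\|\tilde{\mathbf{i}}(t)\|_2\leq \|\tilde{\mathbf{i}}(0)\|_2+c\,|\tilde r_g(0)|$, where $c$ is an explicit constant depending on $\gamma_r$ and the nominal $l_g$ used in the paper's normalization. Since the right-hand side is constant in $t$, passing to $\sup_{t\in\mathbb{R}_+}$ on the left is trivial.

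The remaining task is to replace $|\tilde r_g(0)|$ by $\bar r$. This uses the projection initialization $|\hat r_g(0)|<\bar r$ together with the defining property of $\textsf{Proj}_f$, which confines $\hat r_g$ to the admissible set parameterized by $\bar r$ (up to the tolerance $\varepsilon$), and the standing assumption that the true parameter $r_g$ lies in the same admissible set; together these give $|\tilde r_g(0)|\leq \bar r$. Substituting yields \eqref{bound_conv}.

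The one place I expect to need care is the bookkeeping of constants, namely checking that, after the square root and the subadditivity step, the coefficient $c$ indeed collapses to the $1/\sqrt{2\gamma_r}$ displayed in \eqref{bound_conv} under the paper's normalization of $l_g$; everything else is an algebraic manipulation of the single Lyapunov inequality $V(t)\leq V(0)$, so there is no real dynamical obstacle.
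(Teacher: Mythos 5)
Your proposal is correct in structure but takes a genuinely different, and simpler, route than the paper. The paper does not merely use $\dot V \le 0$: it massages \eqref{Lyapunov_derivative} into the differential inequality $\dot V \le -\tfrac{2k}{l_g}V + \tfrac{k}{l_g\gamma_r}\bar r^2$ and invokes the comparison lemma, obtaining $V(t)\le e^{-2kt/l_g}V(0)+\tfrac{\bar r^2}{2\gamma_r}\bigl(1-e^{-2kt/l_g}\bigr)$, from which the uniform bound follows by essentially the same $\sqrt{a+b}\le\sqrt a+\sqrt b$ step you use. What the comparison-lemma route buys is a transient estimate: exponential convergence of $\tilde{\mathbf{i}}(t)$ into the residual ball of radius $\bar r/\sqrt{2\gamma_r}$, which is what supports the concluding inclusion $\mathbf{i}(t)\in\mathcal{B}\bigl(\mathbf{i}_m(t),\,\|\tilde{\mathbf{i}}(0)\|+\bar r/\sqrt{2\gamma_r}\bigr)$ and the downstream power-ball statements. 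Your monotonicity argument ($V(t)\le V(0)$ plus subadditivity of the square root) delivers only the sup bound, but for Proposition \ref{prop-1} as literally stated that is all that is needed, and your argument is more elementary.

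The concern you flagged about the constant is real, and it cuts against the paper as much as against you. Carried out carefully, \emph{both} routes give $\bar r/\sqrt{l_g\gamma_r}$, not $\bar r/\sqrt{2\gamma_r}$: since $\tfrac{l_g}{2}\|\tilde{\mathbf{i}}(t)\|^2\le V(t)$, passing from $V$ to $\|\tilde{\mathbf{i}}\|^2$ forces division by $l_g/2$, and the paper silently drops this factor (and the $\tilde r_g(0)^2$ contribution to $V(0)$) in its penultimate display. There is no normalization of $l_g$ anywhere in the paper that rescues this — the simulation uses $l_g=0.0367\,\mathsf{H}$, so $2/l_g\gg 1$ and the two constants genuinely differ; your coefficient $c$ does not ``collapse'' to $1/\sqrt{2\gamma_r}$, and \eqref{bound_conv} as displayed is not what either derivation actually yields. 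Separately, your final step $|\tilde r_g(0)|\le\bar r$ needs more than $\hat r_g(0)$ and $r_g$ both lying in the admissible interval $(-\bar r,\bar r)$: that alone only gives $|\tilde r_g(0)|<2\bar r$. You need sign information, e.g.\ $0\le\hat r_g(0)$ and $0<r_g\le\bar r$ (plausible, since $r_g\in\mathbb{R}_+$), to conclude $|\tilde r_g(0)|\le\bar r$. The paper makes the identical unjustified substitution $\tilde r_g^2\le\bar r^2$ in its first chain of inequalities, so this is a shared gap rather than one you introduced.
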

\begin{proof}
    Consider (\ref{Lyapunov_derivative}), without loss of generality it can be expressed as
    \begin{align*}
        \dot{V} &= -(r_m+k) \norm{\tilde{\mathbf{i}}}^2, \\
         &\leq \frac{-2k}{l_g}V + \frac{k}{l_g \gamma_r}\tilde{r}_g^2
         \leq \frac{-2k}{l_g}V + \frac{k}{l_g \gamma_r}\overline{r}^2,
    \end{align*}
    using the comparison lemma (\cite{khalil2002nonlinear}), it follows that
    \begin{align*}
        V(t) &\leq e^{\frac{-2k}{l_g}t}V(0)+\frac{\overline{r}^2}{2\gamma_r}\bigg(1-e^{\frac{-2k}{l_g}t}\bigg), \\
        \norm{\tilde{\mathbf{i}}(t)}^2 &\leq e^{\frac{-2k}{l_g}t}\norm{\tilde{\mathbf{i}}(0)}^2+\frac{\overline{r}^2}{2\gamma_r}\bigg(1-e^{\frac{-2k}{l_g}t}\bigg),
    \end{align*}
Thus, $\mathbf{i}(t) \in \mathcal{B}\bigg(\mathbf{i}_m(t),\norm{\tilde{\mathbf{i}}(0)} + \frac{\overline{r}}{\sqrt{2\gamma_r}}\bigg)$. 
\end{proof}

From the Proposition \ref{prop-1}, the resulting power from the \emph{single} IBR satisfies

\begin{align*}
    p(t) &= \mathbf{v}_g(t)^\top \mathbf{i}(t) \in \mathcal{B}\bigg(p_{ref},\norm{\mathbf{v}_g}\left(\norm{\tilde{\mathbf{i}}(0)}+\frac{\overline{r}}{\sqrt{2\gamma_r}}\right)\bigg), \\
    q(t) &= \mathbf{v}_g(t)^\top J \mathbf{i}(t) \in \mathcal{B}\bigg(q_{ref},\norm{\mathbf{v}_g}\left(\norm{\tilde{\mathbf{i}}(0)}+\frac{\overline{r}}{\sqrt{2\gamma_r}}\right)\bigg).
\end{align*}

The total active power supplied by the $n$ number of IBRs satisfies
\begin{equation*}
   \sum_{i=1}^{n} p_i(t)  \in \mathcal{B}\Bigg(\sum_{i=1}^{n} p_{{ref}_i},\norm{\mathbf{v}_g}\sum_{i=1}^{n}\bigg(\norm{\tilde{\mathbf{i}}_{i(0)}}+\frac{\overline{r}_i}{\sqrt{2\gamma_r}_i}\bigg)\Bigg).
\end{equation*}

Moreover, from the update law in (\ref{update_laws}), it is seen that the parameter error is driven by the current tracking error. Thus, if the system is persistently excited \cite{khalil2002nonlinear}, the healthier systems (having small parametric deviations) will result in less error in the active power tracking. Thus, to minimize uncertainty in the total active power, we consider a resource allocation problem using the parameter deviation error to penalize the power requested from the local ($i^{th})$ IBR. 

\subsection{High-Level Control}
The objective of the high-level control is to track the total desired power by allocating more reference to healthier IBRs (those with less parameter deviation). This is achieved through the resource allocation problem:
\begin{align}\label{Main_Opt}
    \Minimize_{p_i} & \sum_{i=1}^{n}\frac{\beta_i(\tilde{r}_{g_i})}{2}f_i(p_i) \hspace{1mm} \SubjectTo  \sum_{i=1}^{n} p_i = p_A\\ \label{Reactive_opt}
    \Minimize_{q_i} & \sum_{i=1}^{n}\frac{\beta_i(\tilde{r}_{g_i})}{2}h_i(q_i) \hspace{1mm} \SubjectTo  \sum_{i=1}^{n} q_i = q_A ,
\end{align}
where, $n\in \mathbb{N}$ are the number of IBRs, $\beta_i(\tilde{r}_{g_i})$ is the line parameter dependent penalty associated with the $i^{th}$ IBR. It is determined based on the fault indication and the status of the transmission line. $f_i:\mathbb{R}\longrightarrow\mathbb{R}_+$ is the cost associated with the active power and $h_i: \mathbb{R}\longrightarrow\mathbb{R}_+$ is the cost associated with the reactive power of the individual IBR operation. It is assumed that the functions $f_i, h_i$ are $\mu$-strongly convex and $L$-smooth. $p_i \in \mathbb{R}, q_i \in \mathbb{R}$ are the active and the reactive power of the individual IBR. $p_A, q_A$ are the desired aggregated active and reactive power that needs to be tracked by the IBRs. 
\begin{remark}
        The optimization problems in (\ref{Main_Opt}) and  (\ref{Reactive_opt}) are equality-constrained problems and a closed-loop solution can be obtained offline. However, they depend on the parameter-based penalty parameter $\beta_i(\tilde{r}_{g_i})$ thus the computation needs to be performed online. Moreover, having a centralized controller acts as a point of failure which leads to instabilities in the grid in case the controller fails. Thus, we decentralize the controller to address the single point of failure.
\end{remark}

\textit{Decentralized Development:} The \textit{Lagrangian} for the optimization problems (\ref{Main_Opt}) and (\ref{Reactive_opt}) is given as follows:
\begin{subequations}
\begin{align}
    \mathcal{L}(p,\lambda) = \sum_{i=1}^{n}\frac{\beta_i(\tilde{r}_{g_i})}{2}f_i(p_i)+\lambda (\sum_{i=1}^{n}p_i-p_A),\\
    \mathcal{L}(q,\nu) = \sum_{i=1}^{n}\frac{\beta_i(\tilde{r}_{g_i})}{2}h_i(q_i)+\nu (\sum_{i=1}^{n}q_i-q_A),
\end{align}
\end{subequations}
where $\lambda, \nu \in \mathbb{R}$ are the dual variables for active and reactive powers. Given $\lambda$ and $\nu$, let $p_i^*$ and $q_i^*$ to be the solution of the optimization problem in (\ref{Main_Opt})-(\ref{Reactive_opt}) and is given as
\begin{subequations}
\begin{align}
    p_i^*(\lambda) = \argmin_{p_i} \bigg(\frac{\beta_i(\tilde{r}_{g_i})}{2}f_i(p_i) + \lambda p_i \bigg), \\
    q_i^*(\nu) = \argmin_{q_i} \bigg(\frac{\beta_i(\tilde{r}_{g_i})}{2}h_i(q_i) + \nu q_i \bigg).
\end{align}
\end{subequations} 

Thus, given $p_i^*$ and $q_i^*$, the \emph{Lagrangian dual functions} are defined as the minimum values of the Lagrangian over $p_i$ and $q_i$
\begin{subequations}
\begin{align}
    g(\lambda) \triangleq \inf_{p_i} \mathcal{L}(p_i,\lambda) \equiv \mathcal{L}(p_i^*,\lambda), \\
    d(\nu) \triangleq \inf_{q_i} \mathcal{L}(q_i,\nu) \equiv \mathcal{L}(q_i^*,\nu),
\end{align}
\end{subequations}
Consequently, the dual problems for the active and the reactive power are given as:
\begin{align}\label{Main_Dual}
    \Maximize_\lambda  g(\lambda) && \Maximize_\nu  d(\nu).
\end{align}
Assume the dual problem can be broken down into $n$ sub-problems.  (\ref{Main_Dual}) can be written as:
\begin{subequations}\label{Second_Dual}
\begin{align}
    \Maximize_{\lambda_i=\lambda_j} \quad \sum_{i=1}^{n} g_i(\lambda_i), \hspace{2mm} \forall \{i,j\} \in \mathcal{G} \\
    \Maximize_{\nu_i=\nu_j} \quad \sum_{i=1}^{n} d_i(\nu_i), \hspace{2mm} \forall \{i,j\} \in \mathcal{G}
\end{align}
\end{subequations}
the optimization problem in (\ref{Second_Dual}a-b) are solved over a network $\mathcal{G}$. It is assumed that the network has a \emph{spanning tree}. The information of the reference active and reactive powers $p_A, q_A$ is available only for the first node. Thus, the gradient update step for the $\lambda$ and $\nu$ is given as follows ($k$ here is iteration counter):
\begin{equation}
\lambda_i^{k+1} = \begin{cases}  \lambda_i^k + \alpha\sum_{j \in \mathcal{N}_i}w_{ij} (p_i^* - p_A) \hspace{3mm} \text{for} \hspace{1mm} i=1, \\
      \lambda_i^k + \alpha \sum_{j \in \mathcal{N}_i}w_{ij} p_i^* \hspace{14mm} \text{otherwise},
\end{cases}    
\end{equation}
\begin{equation}
\nu_i^{k+1} = \begin{cases}  \nu_i^k + \alpha \sum_{j \in \mathcal{N}_i}w_{ij} (q_i^* - q_A) \hspace{3mm} \text{for} \hspace{1mm} i=1, \\
      \nu_i^k + \alpha \sum_{j \in \mathcal{N}_i}w_{ij} q_i^* \hspace{14mm} \text{otherwise},
\end{cases}    
\end{equation}
where $\alpha > 0$ is the ascent step size assumed to be a constant for every update step. Consequently, the high-level power splitting decentralized optimization algorithm is given as the following iterative scheme:
\begin{subequations}\label{Network_Opt}
\begin{align}
    p_i^{k+1} &= \argmin_{p_i} \bigg(\frac{\beta_i(\tilde{r}_{g_i})}{2}f_i(p_i) + \lambda_i^k p_i \bigg)  \\
    \boldsymbol{\lambda}^{k+1}  &= \boldsymbol{\lambda}^k + \alpha W(\mathbf{p}^{t+1}-\mathbf{p}_A), \\
    q_i^{k+1} &= \argmin_{q_i} \bigg(\frac{\beta_i(\tilde{r}_{g_i})}{2}h_i(q_i) + \nu_i^k q_i \bigg)  \\
    \boldsymbol{\nu}^{k+1}  &= \boldsymbol{\nu}^k + \alpha W(\mathbf{q}^{k+1}-\mathbf{q}_A),
\end{align}    
\end{subequations}
where $p_i^{k+1} \in \mathbb{R}, q_i^{k+1} \in \mathbb{R}$ are the optimal active and reactive power of the individual IBR, $W \in \mathbb{R}^{n \times n}$ is the graph Laplacian based weigh matrix. $\mathbf{p}^{k+1} \triangleq \left[\begin{array}{cccc} p_1^{k+1}&p_2^{k+1}&\hdots&p_n^{k+1}\end{array}\right]^{\top} \in \mathbb{R}^n$ and \newline $\mathbf{q}^{k+1} \triangleq \left[\begin{array}{cccc} q_1^{k+1}&q_2^{k+1}&\hdots&q_n^{k+1}\end{array}\right]^{\top} \in \mathbb{R}^n$ are the vectors of primal optimal active and reactive power profiles. $\mathbf{p}_A \triangleq \left[\begin{array}{cccc} p_A&0&\hdots&0\end{array}\right]^{\top} \in \mathbb{R}^n$ and $\mathbf{q}_A \triangleq \left[\begin{array}{cccc} q_A&0&\hdots&0\end{array}\right]^{\top} \in \mathbb{R}^n$ are the reference active and reactive power profiles. $f_i$ and $h_i$ are assumed to be $\mu$-strongly convex and $L$-smooth. The primal optimization problems in (\ref{Network_Opt}a) and (\ref{Network_Opt}c) are unconstrained problems and under the assumption that the fixed points $\mathbf{p}^*$ and $\mathbf{q}^*$ exist and the gradient step is a \emph{contraction} the following linear convergence property holds
\begin{subequations}\label{convergence}
\begin{align}
    \norm{\mathbf{p}^{N}-\mathbf{p}^*} \leq L_p^{N-1} \norm{\mathbf{p}_0-\mathbf{p}^*} \\
    \norm{\mathbf{q}^{N}-\mathbf{q}^*} \leq L_q^{N-1} \norm{\mathbf{q}_0-\mathbf{q}^*}
\end{align}    
\end{subequations}
where $N$ is the number of iterations, $L_p, L_q <1 \triangleq max\{\left|1-\alpha \mu \beta\right|,\left|1-\alpha L \beta\right|\}$ are the contraction coefficients. $\mathbf{p}_0(t), \mathbf{q}_0(t)$ are the system active and reactive power measurements acting as the initial values for the optimization problem. The choice of step size $\alpha = 2/(L+\mu \beta)$ minimizes the contraction coefficient. Thus, the condition number defined as $\gamma = L/\mu \beta$ dictates the convergence of the optimization problem and the problem requires at most $\mathcal{O}(\gamma log(\frac{1}{\epsilon}))$ iterations to reach within the $\epsilon$ neighborhood of the primal optimal solutions $\mathbf{p}^*$ and $\mathbf{q}^*$ \cite{boyd2004convex}. The stability of the dual update dynamical systems in (\ref{Network_Opt}b) and (\ref{Network_Opt}d) depends on the weight matrix $W$ and the analysis of it was provided in \cite{Xiao_L}. Given the \emph{optimal} active and reactive powers and the nominal grid voltage, the low-level current reference for $i^{th}$ IBR can be generated based as follows:
\begin{equation}\label{current_ref}
    \mathbf{i}_{ref} = \frac{\mathbf{v}_g}{\norm{\mathbf{v}_g}_2^2}p_i^{N}+J\frac{\mathbf{v}_g}{\norm{\mathbf{v}_g}_2^2}q_i^{N},
\end{equation}
where $\mathbf{i}_{ref} \in \mathbb{R}^2$. Let $p_i^{N}, q_i^{N}$ be the exact solutions to the unconstrained optimization problems in (\ref{Network_Opt}a,\ref{Network_Opt}c), then 
\begin{equation}\label{closed_current_ref}
  \mathbf{i}_{ref} = -\frac{1}{\beta_i(\tilde{r}_{g_i})}\bigg(\frac{\mathbf{v}_g}{\norm{\mathbf{v}_g}_2^2}\lambda_i^N + J \frac{\mathbf{v}_g}{\norm{\mathbf{v}_g}_2^2}\nu_i^N\bigg),   
\end{equation}
from which it is seen that that the reference $\mathbf{i}_{ref}$ to the low-level controller decreases monotonically with $\beta_i(\tilde{r}_{g_i})$. Figure \ref{Control_Architecture} shows the overall control architecture.
\begin{figure}[t!] 
\centerline{\includegraphics[width=2.2 in]{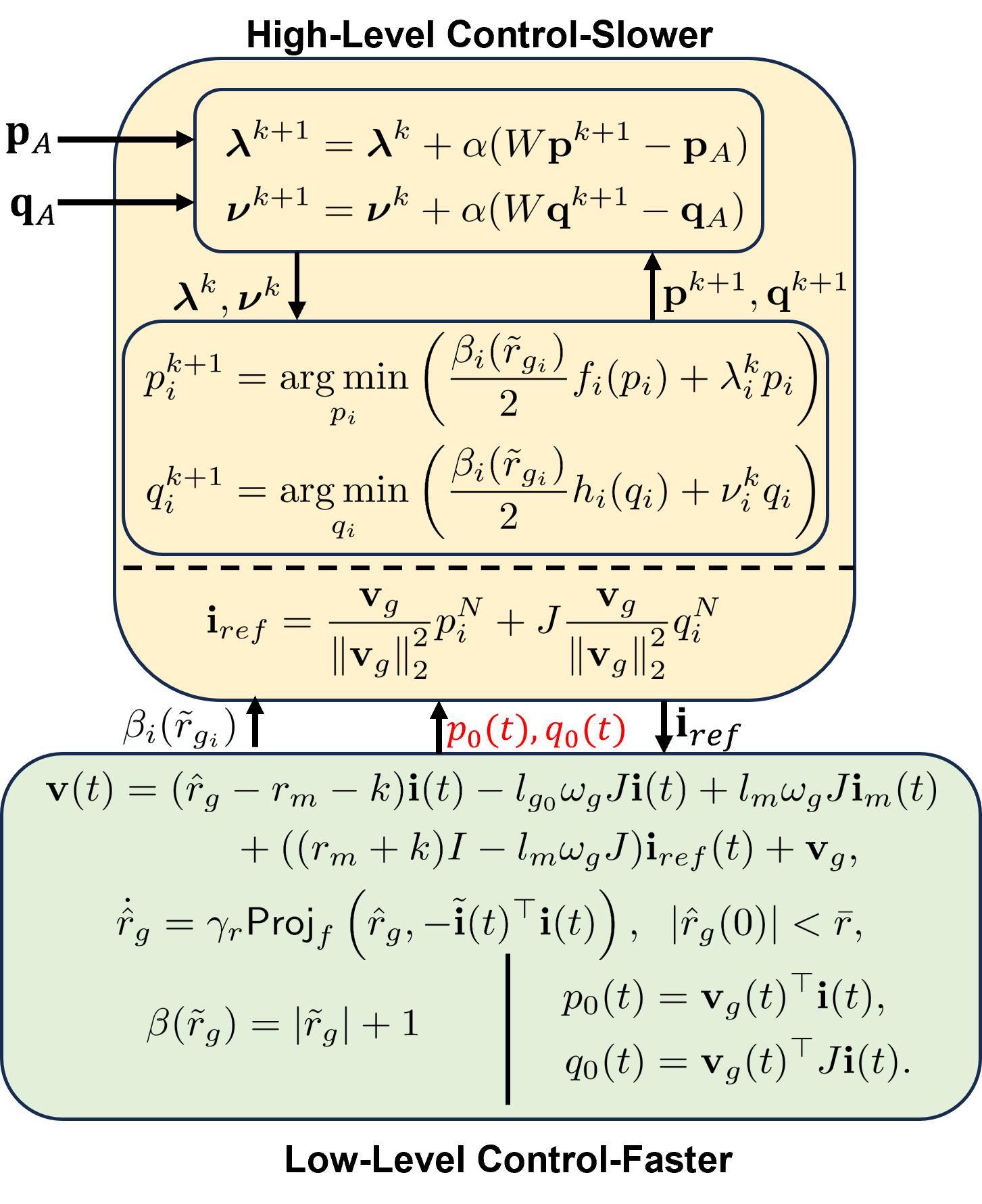}}
\caption{Control Architecture}
\label{Control_Architecture}
\end{figure}

\section{Numerical Simulation}\label{Sim}
The simulation is performed on a system with 3-IBRs. We consider a scenario where there is a fault in Inverter-3 at $t=0.2s$. Also, a grid voltage swell (as disturbance) of $10\%$ the nominal voltage is induced at $t=0.2s$. The system parameters used in the simulation are the transmission line resistance $r_g=0.027\Omega$, inductance $l_g=0.0367$\textsf{H}, the nominal grid voltage $v_g = 392$\textsf{V} (LN rms), the grid frequency $w_g=60$\textsf{Hz}. The optimization parameters chosen are $\alpha = 0.1$, the optimization weights $\beta_i=1$ under normal operation, and $\beta_i=10^4$ under faulty conditions. The adjacency matrix, the degree matrix, and the Laplacian based weight matrix are determined as follows: $$A=\begin{bmatrix} 0 & 1 &0\\1 & 0 & 1\\0 &1 &0 \end{bmatrix}, D=\begin{bmatrix} 1 & 0 &0\\0 & 2 & 0\\0 &0 &1 \end{bmatrix}, W=\begin{bmatrix} 0.666 & 0.333 & 0\\0.333 & 0.333 & 0.333\\0 &0.333 &0.666 \end{bmatrix}$$
it can be seen that the matrix $W$ is doubly stochastic. The results of the designed \emph{decentralized splitter} controller are compared with the \emph{adaptive splitter} based controller proposed in \cite{AMELI}. The bottom half of Figure \ref{fig_example2.jpg} (c), (d) shows the active power distribution in the adaptive splitter-based and decentralized control-based design. It can be seen that when there is a fault, the split mechanism in the adaptive splitter-based design slowly adjusts to balance the aggregated active power. This is because the low-level control in this approach is a function of the high-level distribution. Whereas, in the decentralized-based approach, it can be seen that the distribution almost instantaneously adjusts to balance the active power references to the low-level control, this is because of the changes in the optimization penalty weight $\beta_i(\tilde{r}_{g_i})$. Moreover, the low-level control designed asymptotically tracks the high-level generated power reference.

\begin{figure}[t!] 
\centerline{\includegraphics[width=3.4 in]{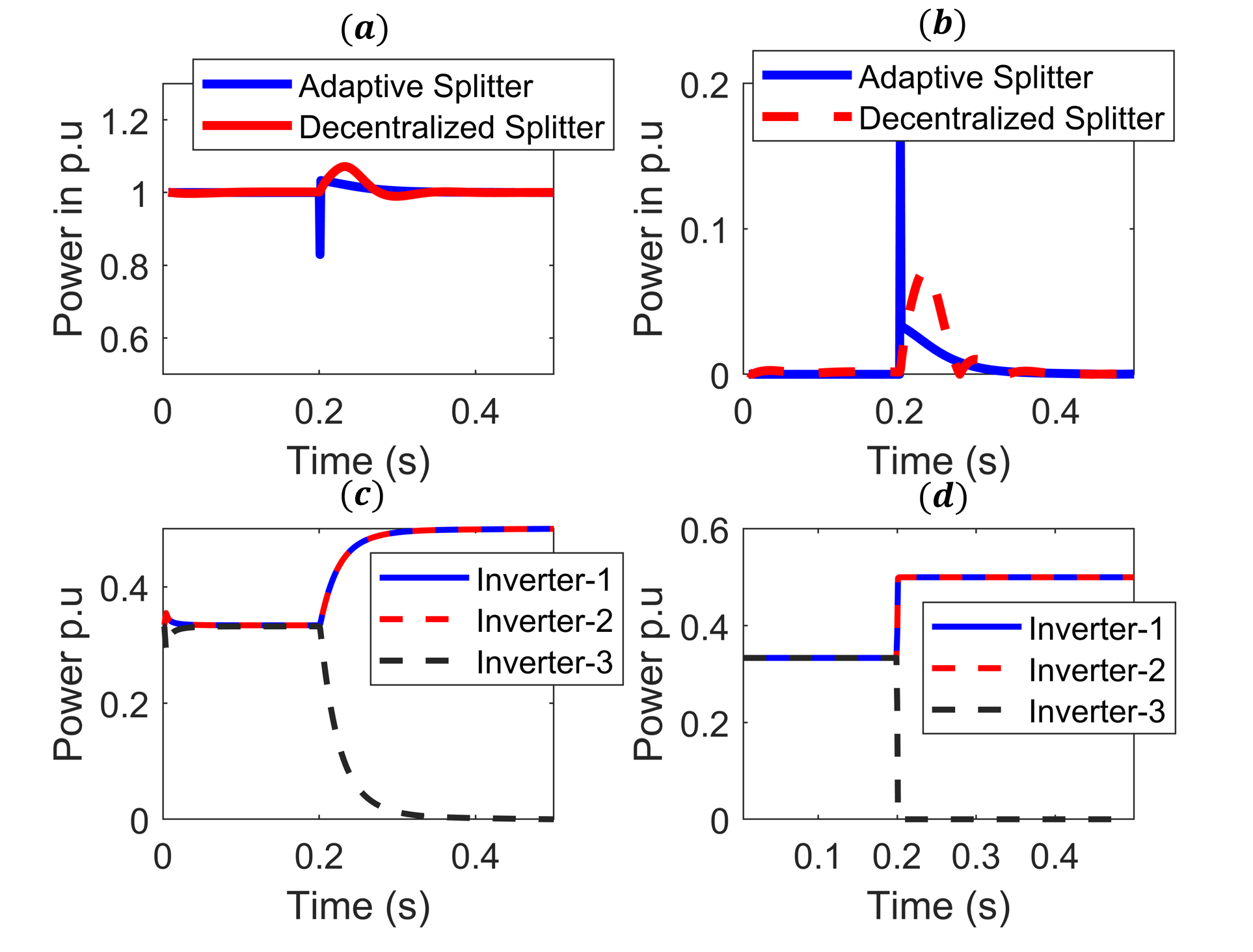}}
\caption{Active power tracking and tracking error (top) and power split (reference generated) for the adaptive splitter-based control design (left) and decentralized control design (right).}
\label{fig_example2.jpg}
\end{figure}

\begin{figure}[t!] 
\centerline{\includegraphics[width=3.5 in]{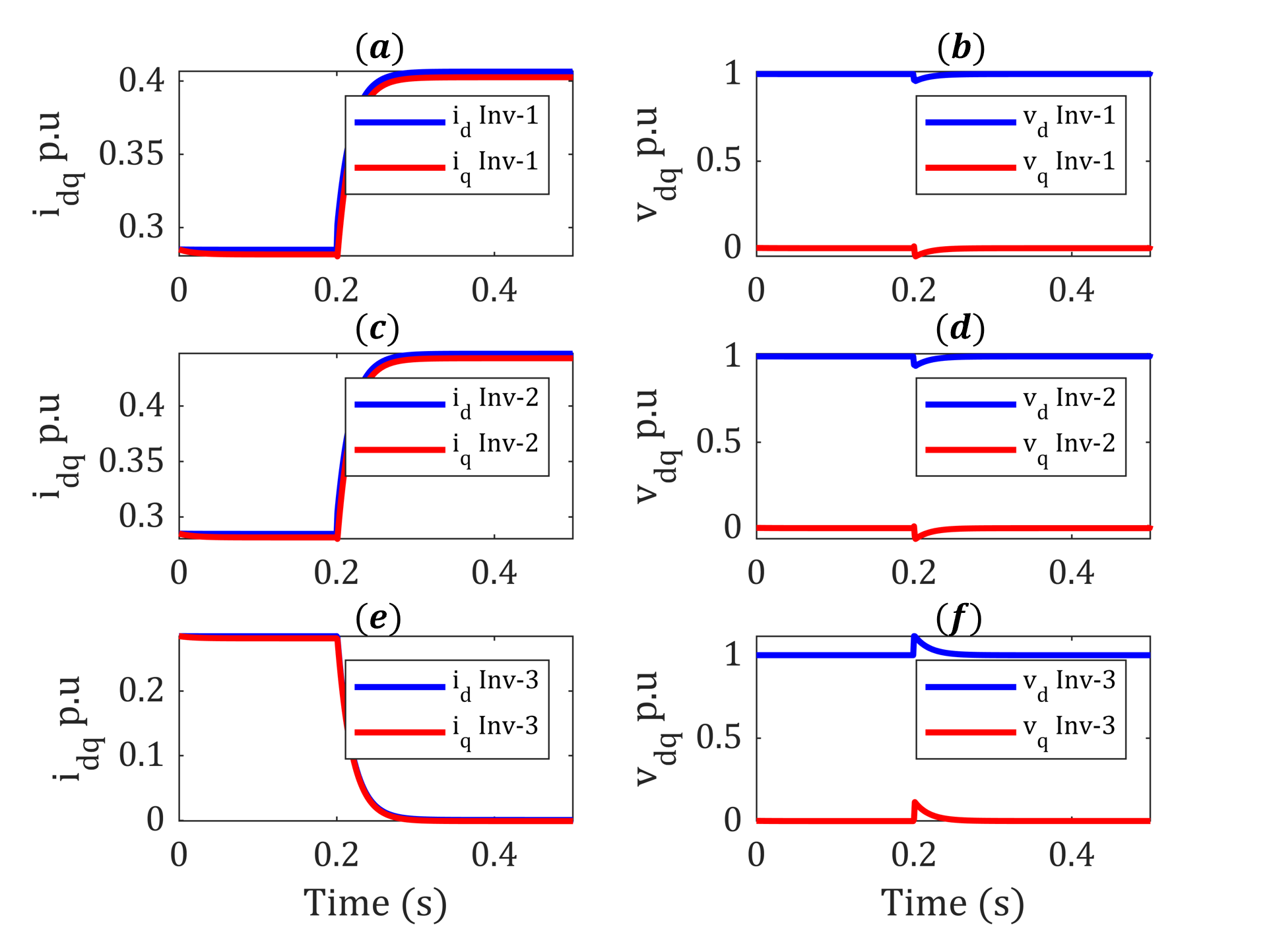}}
\caption{Currents $\mathbf{i}$ and Voltages $\mathbf{v}$ of the individual inverters before and after the fault}
\label{fig_example.jpg}
\end{figure}

The aggregated active power tracking shown in the top half of Figure-\ref{fig_example2.jpg} (a), (b) compares the high-level active power distribution between the adaptive control-based splitter design and the decentralized optimization-based splitter design. It can be seen that when the fault and the grid voltage swell occur at $t=0.2s$, the decentralized optimization-based controller provides a better tracking response. The magnitude of active power deviation from the nominal $1$p.u value is significant while using the adaptive control-based power distribution mechanism. The result shows a significant improvement in the context of regulating active power at nominal value from the existing design technique in the literature. 

Finally, the currents ($\mathbf{i}$) and the input voltages ($\mathbf{v}$) for the three inverters are shown in Figure \ref{fig_example.jpg}. It can be seen from the subplot (e) that the d and q-axis currents of Inverter-3 go to zero after the fault. Similarly, the impact of the fault can also be seen on the d and q-axis voltages of the Inverter-3 from subplot (f). Moreover, the current sharing between the IBRs is split between the remaining two IBRs with healthy transmission lines which can be seen from subplots (a) and (c).

Thus, the results demonstrate the impact of the designed high-level decentralized optimization-based control. The comparison with the adaptive control-based power splitter further solidifies the control effectiveness of the designed high-level controller.  

\section{Conclusion and Future Work}\label{Conc}

This paper presents a decentralized high-level control for the aggregated active power tracking in the IBRs under complete loss of inverter due to the faults in the transmission line. The proposed algorithm adaptively splits the active power taking into consideration the condition of the IBR lines. A parameter estimator is developed which determines the healthiness of the transmission line and updates the high-level algorithm on the transmission line's status. The results show an improvement in the active power tracking response compared to the previously proposed method. The developed decentralized algorithm relies on the established concept of duality. The future extensions of this work will focus on the stability of the developed decentralized control. Investigating the transmission limitations and their impact on power-sharing will also be reported in future extensions of this work.

\bibliographystyle{IEEEtran}
\bibliography{myreferences}

\end{document}